\documentclass[11pt]{article}
\pdfoutput=1
\usepackage{amssymb}
\usepackage{amsmath}
\usepackage{amsthm}
\usepackage{amsfonts}
\usepackage{enumerate}
\usepackage{url}
\usepackage{graphicx}

\newtheorem{theorem}{Theorem}[section]
\newtheorem{conjecture}[theorem]{Conjecture}

\newtheorem{lemma}[theorem]{Lemma}

\newtheorem{hypothesis}[theorem]{Hypothesis}

\DeclareMathOperator{\lcm}{lcm}
\DeclareMathOperator{\rad}{rad}

\textwidth=15cm
\oddsidemargin=1cm
\evensidemargin=1cm

\begin{document}

\title{Are the Collatz and abc conjectures related?}
\author{Olivier Rozier}
\date{}

\maketitle

\begin{abstract}
The Collatz and $abc$ conjectures, both well known and thoroughly studied, appear to be largely unrelated at first sight. We show that assuming the $abc$ conjecture true is helpful to improve the lower bound of integers initiating a particular type of Collatz sequences, namely finite sequences of a given length where all terms but one are odd with the usual ``shortcut'' form. To obtain sharper bounds in this context, we are led to consider a small subset of the $abc$-hits. Then, it turns out that Collatz iterations as well as Wieferich primes may be used to find large triples in this subset.
\end{abstract}

\section{Introduction}

The Collatz conjecture deals with positive integer sequences arising when repeatedly applying the function
\begin{equation}
T(n) = \left\{\begin{array}{ll}
  \frac{3n+1}{2}, & \mbox{if n is odd,} \\
  \frac{n}{2}, & \mbox{otherwise.} 
\end{array}\right.
\end{equation}
Originally, Lothar Collatz introduced those sequences with the transformation $n \mapsto 3n+1$ for odd $n$, without dividing by 2 in the same step \cite{Lag10}. We say that $T$ has a shortcut, since the number of steps is smaller for the same final result. It is expected that whatever the first term of the sequence, it always ends up reaching the trivial cycle $(1,2)$. The latter assumption, called Collatz conjecture, is very popular under various names like the $3x+1$ or Syracuse problem, even outside the mathematical community.

On the other hand, the $abc$ conjecture of J. Oesterl\'e and D. Masser is considered as particularly important for its numerous implications in number theory \cite{Oes88,Wal03}. It poses a limitation on the presence of high powers of primes in Diophantine equations over three terms. There are various ways, more or less equivalent, to formulate this conjecture, which can be stated as follows:

\begin{conjecture}{\em ($abc$ conjecture)}\label{conj_abc}
For every $\varepsilon > 0$, there exist only finitely many triples $(a, b, c)$ of coprime positive integers satisfying $a + b = c$ and such that
\begin{equation}\label{eq:abc}
c > \rad(abc)^{1+\varepsilon }
\end{equation}
where $\rad(abc)$ denotes the radical of $abc$, that is the product of its distinct prime factors.
\end{conjecture}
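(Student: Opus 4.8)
The plan is, first and foremost, to acknowledge that Conjecture~\ref{conj_abc} is one of the central open problems of number theory, so what follows can only be a sketch of the structurally correct lines of attack rather than a complete argument. (The one existing claim to a full proof, Mochizuki's inter-universal Teichm\"uller theory, remains unverified by the wider community, so I set it aside.) The natural place to start is the function field analogue, the Mason--Stothers theorem, which asserts the \emph{same} inequality with $\varepsilon = 0$ for coprime polynomials $a+b=c$: there $\max(\deg a,\deg b,\deg c)\le \deg\rad(abc)-1$. I would first recall its one-line mechanism: differentiate the relation $a+b=c$, form the Wronskian $W=ab'-a'b$, observe that the repeated-factor parts $a/\rad a$, $b/\rad b$, $c/\rad c$ each divide $W$ while being pairwise coprime, and compare $\deg W\le \deg a+\deg b-1$ with the resulting lower bound. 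The dream is to transport this to $\mathbb{Z}$, so the first genuine step would be to manufacture an arithmetic surrogate for the derivative (in the spirit of arithmetic-derivative or Arakelov intersection-theoretic constructions) that could play the role of $W$.

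Since no such surrogate is known to produce the required bound, I would pivot to the elliptic-curve reformulation. To a putative triple $a+b=c$ violating \eqref{eq:abc} I attach the Frey--Hellegouarch curve $E:\,y^2=x(x-a)(x+b)$, whose minimal discriminant is essentially $(abc)^2$ and whose conductor is essentially $\rad(abc)$. Under this dictionary, inequality \eqref{eq:abc} translates into the Szpiro inequality bounding the minimal discriminant of $E$ by a fixed power of its conductor, uniformly over all such curves. The decisive step would then be to control the Faltings height of $E$ (equivalently, the degree of its modular parametrization) purely in terms of the conductor --- precisely the content of Vojta's height conjecture on arithmetic surfaces, from which \eqref{eq:abc} would follow.

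The main obstacle, which I do not expect to overcome, is exactly this last step: no unconditional bound on the height of the Frey curve in terms of its conductor is known. In the function field case the analogous bound is supplied by the Riemann--Hurwitz formula applied to the modular parametrization, where ramification can literally be counted; over $\mathbb{Q}$ the corresponding inequality is itself equivalent to $abc$, and there is no known substitute for the ramification-counting mechanism. The only genuinely unconditional fallback is Baker's theory of linear forms in logarithms, which bounds $\log c$ through lower estimates for the relevant logarithmic form and yields inequalities of the shape $c<\exp\!\bigl(\kappa\,\rad(abc)^{\,1/3+o(1)}\bigr)$ --- effective, but exponentially far from the polynomial bound \eqref{eq:abc} demands. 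I would therefore present the Frey-curve/Szpiro route as the structurally correct plan while flagging the conductor-to-height bound as the essential and, at present, insurmountable gap.
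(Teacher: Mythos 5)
This statement is the $abc$ conjecture itself, which the paper never proves: it is stated as Conjecture \ref{conj_abc} and only ever assumed as a hypothesis (e.g., in Theorem \ref{th:LBH_ABC} and Lemma \ref{lem:LBH_ABC}). Your refusal to claim a proof of an open problem is the correct response and matches the paper's treatment exactly; your survey of the known lines of attack (Mason--Stothers, the Frey--Hellegouarch/Szpiro reformulation, Vojta's conjecture, Baker's bounds) is accurate, but as you say yourself it is not, and cannot presently be, a proof.
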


The purpose of the present article is to highlight a possible relationship between the $abc$ and Collatz conjectures. Apparently, no direct relationship is mentioned in the literature, despite a previous attempt\footnote{In fact, Kaneda tried to apply the $n$-conjecture, which is a generalization of $abc$ with $n$ terms, first introduced in \cite{Bro94}.} by M. Kaneda briefly related at the end of \cite{Kan14}. Let us recall that the $abc$ conjecture originates in the theory of elliptic curves, an active field of research in number theory far from the Collatz conjecture \cite{Oes88}.

Both conjectures are nevertheless connected to linear forms in logarithms and, thus, to Baker theorem \cite{Bak75}. This number theoretic result is central when studying the existence of non-trivial Collatz cycles \cite{Sim05,Ste77} and has a clear relationship with numerous Diophantine equations linked to the $abc$ conjecture, e.g., Fermat last theorem \cite{Gra02,Oes88,Wal03}. Let us also point out that the set of Wieferich primes (namely, primes $p$ such that $2^{p-1}-1$ is a  multiple of $p^2$) is another example of a topic that is somehow connected to both conjectures. On the one hand, the existence of infinitely many non-Wieferich primes, a question which is still open despite numerical evidence, is implied by the  $abc$ conjecture \cite{Sil88}. On the other hand, the set of Wieferich numbers, a natural extension of the aforementioned set of primes, is at the heart of a partial solution to several variants of the Collatz conjecture \cite{Fra95}. Specifically, the existence of sequences not leading to 1 is established in the so-called ``$qx+1$'' variants for $q$ a Wieferich number, whereas the existence of sequences that diverge to infinity remains elusive for every odd $q>3$.

We start our study with a lower bound hypothesis (LBH) regarding the first term of Collatz sequences of finite length having a given number of odd and even terms. If proved true, such a statement is strong enough to settle the Collatz conjecture (Lemma 3.1 in \cite{Roz17}). Therefore, any progress regarding its validity might be of interest.

\begin{hypothesis} \label{hyp:LBH} $\emph{(Lower Bound Hypothesis - LBH)}$
There is a real constant $C \geq 0$ such that for all positive integers $j$ and $n$ not both equal to 1, we have
\begin{equation} \label{eq:LBH} 
 n \geq j^{-C} \; 2^{\left( 1-H\left( \frac{q}{j}\right)  \right) j}
\end{equation}
where $q$ is the number of odd integers in the vector $\left(  n, T(n), \ldots, T^{j-1}(n) \right)$ of the first $j$ iterates starting from $n$, and $H$ is the binary entropy function $H(x) = -x \log_{2}(x) - (1-x) \log_{2}(1-x)$ for $0<x<1$, with $H(0) = H(1) = 0$.
\end{hypothesis}

In other words, it is stated that integers $n$ whose parity vector 
$$\left(  n \mod 2, \ldots, T^{j-1}(n) \mod 2 \right)$$
has low binary entropy are unlikely to be found below a certain lower bound. This conjecture is known to hold true when $2q \leq j$ and when $q=j$.

Like the $abc$ conjecture, LBH is derived from heuristic considerations and supported by empirical data. In fact, both are linked to a particular representation of positive integers, their prime factorization for $abc$ and their parity sequence for LBH \cite{Roz19}. Those representations may be viewed as a kind of information whose entropy is measurable and can be translated into probabilistic terms, then enabling accurate predictions.

First, we show in Section \ref{sec:LB} that the $abc$ conjecture implies a lower bound reasonably close to \eqref{eq:LBH} in the particular case of sequences where all terms but one are odd ($q=j-1$). Then, in Section \ref{sec:mu_abc}, we take a closer look at the $abc$ conjecture and study the set ${\cal H}_{\mu}$ of triples $(a,b,c)$ of coprime positive integers such that $a+b=c$ and $\log c > \mu (abc)$ where
$$ \mu \left( {p_1}^{n_1} \ldots {p_k}^{n_k} \right) = \log (p_1 \ldots p_k) + \log (n_1 \ldots n_k)$$
for distinct primes $p_1, \ldots, p_k$ and positive integers $n_1, \ldots, n_k$. It turns out that ${\cal H}_{\mu}$ is a very sparse subset of the $abc$-hits. In Section \ref{sec:mu_abc_Collatz}, we prove that the lower bound \eqref{eq:LBH} holds with $C=1$ for the same Collatz sequences as in \S\ref{sec:LB}, unless maybe if we encounter an element of ${\cal H}_{\mu}$. Finally, in Section \ref{sec:Wief}, we show that the question of whether ${\cal H}_{\mu}$ is infinite has a close connection with the number of Wieferich primes.

\section{A particular type of Collatz sequences}
\label{sec:LB}

For any integer $j \geq 2$, let us consider Collatz sequences of length $j$ for which all terms but one are odd. Let ${\cal N}(j)$ denote the set of positive integers $n$ lower than $2^j$ initiating such sequences, i.e., for which there is exactly one even term among $n$, $T(n)$, \ldots, $T^{j-1}(n)$. The set ${\cal N}(j)$ contains exactly $j$ elements. This is a simple consequence of the one-to-one correspondence, discovered by C. J. Everett \cite{Eve77} and by R. Terras \cite{Ter76}, between the congruence classes modulo $2^j$ and the set of parity vectors of length $j$. E.g., one has $${\cal N}(10) = \lbrace 159, 239, 447, 511, 639, 681, 767, 795, 871, 1\,022 \rbrace.$$

We have shown in Lemma 6.1 of \cite{Roz17} that 
\begin{equation} \label{eq:lower_bound_nj}
n \geq 2^{j/(1+\rho)} - 2 \quad \text{for any $n \in {\cal N}(j)$}
\end{equation}
with $\rho = \log_2 3 = 1.585\ldots$. Unfortunately, this result is far from the desired lower bound given by LBH in the particular case $q=j-1$, which asserts that
\begin{equation} \label{eq:lower_bound_nj_lbh}
 n \geq j^{-C} \; 2^{\left( 1-H\left( 1-\frac{1}{j}\right)  \right) j} \quad \text{for any $n \in {\cal N}(j)$}
\end{equation}
where the right-hand side grows asymptotically as $j^{-(C+1)} \; 2^j$ up to a multiplicative constant.

Although LBH remains unsettled for this particular case, it can be helpful to specify which lower bound we should expect to hold. Yet, it is unclear which value of $C$ should be considered. Figure \ref{fig:LBH_misses} shows that, for each $ 2 \leq j \leq 3 \cdot 10^4$, the inequality \eqref{eq:lower_bound_nj_lbh} is satisfied with $C=1$ whereas it is increasingly falsified when $C$ decreases below $0.9$, suggesting that it is sharp with $C \approx 1$.

\begin{figure}[ht]
\centering
\includegraphics[width=0.8\textwidth]{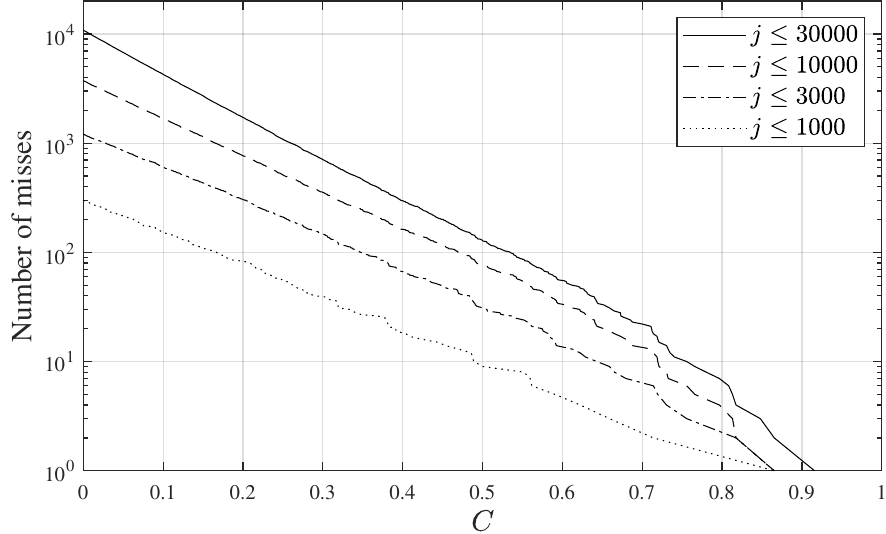}
\caption{Log plot of the number of misses of Hypothesis \ref{hyp:LBH} (LBH) with respect to the constant $C$ arbitrarily fixed between  0 and 1, in the particular case of Collatz sequences of length $j$ having exactly one even term ($q=j-1$) for $j$ up to 1\,000, 3\,000, 10\,000 and 30\,000.}
\label{fig:LBH_misses}
\end{figure}

If we assume the $abc$ conjecture true, we obtain a lower bound which is much better  than \eqref{eq:lower_bound_nj} and comes close to \eqref{eq:lower_bound_nj_lbh}.

\begin{theorem}\label{th:LBH_ABC}
Assume that the $abc$ conjecture is true. Then, for every $\varepsilon > 0$, there exists a  constant $K(\varepsilon) > 0$ such that 
\begin{equation} \label{eq:lower_bound_nj_abc_th}
n \geq K(\varepsilon) \; 2^{(1 - \varepsilon)j} \quad \text{for all $j \geq 2$ and $n \in {\cal N}(j)$}.
\end{equation}
\end{theorem}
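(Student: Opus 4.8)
The plan is to attach to each element $n \in \mathcal{N}(j)$ an explicit Diophantine identity relating $n$ to pure powers of $2$ and $3$, convert it into a coprime triple $a+b=c$, and feed that triple to the $abc$ conjecture. The extra ingredient beyond $abc$ is a trivial size bound that takes over exactly where $abc$ becomes ineffective.

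First I would record the position of the unique even term. By the Everett--Terras correspondence each of the $j$ parity vectors of length $j$ with a single even entry is realised by exactly one $n \in \mathcal{N}(j)$; write $m \in \{0,\dots,j-1\}$ for the index with $T^m(n)$ even. Using the elementary recursion $2\,T^{i+1}(n) = 3^{\epsilon_i}T^i(n) + \epsilon_i$ with $\epsilon_i = T^i(n) \bmod 2$, an induction over the two all-odd runs before and after position $m$ yields the closed form
\begin{equation*}
2^{j-1}\bigl(T^{j-1}(n)+1\bigr) = 3^{\,j-m-2}\bigl(3^m(n+1)+2^m\bigr)
\end{equation*}
(the case $m=j-1$ being degenerate and giving $n=2^{j-1}-1$ directly). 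I would then observe that $3^{\,j-m-2}$ must divide $T^{j-1}(n)+1$, set $V := (T^{j-1}(n)+1)/3^{\,j-m-2}$, and note that the evenness of $T^m(n)$ forces $v_2(n+1)=m$, so that $w := (n+1)/2^m$ is an odd integer. Dividing out the common factor $2^m$ leaves the clean identity $1 + 3^m w = 2^{\,j-1-m}V$.

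Next I would assemble the triple $(a,b,c)=(1,\,3^m w,\,2^{\,j-1-m}V)$: it satisfies $a+b=c$, and since any common prime of $b$ and $c$ would divide $c-b=1$, it is coprime. Its radical obeys $\rad(abc)\le 6\,wV$, and eliminating $V=c/2^{\,j-1-m}$ together with $c=1+3^m w \le 2\cdot 3^m w$ gives $\rad(abc)\le 12\cdot 3^m w^2/2^{\,j-1-m}$. Applying the $abc$ conjecture in its reformulated shape --- finitely many exceptions means $c \le K'(\varepsilon)\,\rad(abc)^{1+\varepsilon}$ for a constant absorbing the finite exceptional set --- and inserting $c \ge 3^m w$ yields, after taking logarithms, a lower bound of the shape
\begin{equation*}
\log_2(n+1) \ge m + \frac{(j-1-m)(1+\varepsilon) - m\varepsilon\log_2 3 - \gamma(\varepsilon)}{1+2\varepsilon},
\end{equation*}
valid whenever the right-hand bracket is nonnegative.

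The step I expect to be the main obstacle is extracting a \emph{uniform} bound from this $m$-dependent estimate. The $abc$-derived term degrades as $m$ grows and becomes vacuous near $m=j-1$, so $abc$ alone cannot cover every element of $\mathcal{N}(j)$. The remedy is to combine it with the free bound $n+1 = 2^m w \ge 2^m$: for the indices $m$ where the $abc$ contribution is positive one uses the displayed inequality (decreasing in $m$), while for the remaining large-$m$ indices one uses $\log_2(n+1)\ge m$ (increasing in $m$). Minimising the envelope of the two over $m \in \{0,\dots,j-1\}$, the worst case occurs at the crossover $m^{\ast}=\bigl((j-1)(1+\varepsilon)-\gamma(\varepsilon)\bigr)/\bigl(1+\varepsilon(1+\log_2 3)\bigr)$, giving $\log_2(n+1)\ge (1-\varepsilon\log_2 3)(j-1) - O_\varepsilon(1)$. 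Finally, given any target $\varepsilon'>0$ I would run the argument with $\varepsilon=\varepsilon'/\log_2 3$ and absorb the additive constant into $K(\varepsilon')$, which delivers $n \ge K(\varepsilon')\,2^{(1-\varepsilon')j}$ for all $j \ge 2$, the finitely many small $j$ being handled by shrinking $K(\varepsilon')$.
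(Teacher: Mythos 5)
Your proposal is correct and follows essentially the same route as the paper: the heart of both arguments is the identity $1+3^m w = 2^{j-1-m}V$ (the paper's $1+3^kA = 2^{j-k}B$ with $n+1=2^kA$, obtained there from the congruence $n \equiv -1-(2/3)^k \pmod{2^j}$ rather than by unrolling the recursion), to which the $abc$ conjecture is applied after bounding the radical by $O(3^m w^2 2^{-(j-1-m)})$, with the $2$-adic valuation $v_2(n+1)=m$ supplying the term that keeps the bound uniform in $m$ and finitely many small $j$ absorbed into $K(\varepsilon)$. The only differences are bookkeeping: the paper bounds the cofactor by $B \le 3^k$ and gets denominator $1+\varepsilon$ where you get $1+2\varepsilon$, and its single chain of inequalities handles the large-$m$ regime implicitly, so your explicit envelope/crossover analysis, while correct, is not actually needed.
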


Let us recall that the elements of ${\cal N}(j)$ are easy to compute by using, for each $0 \leq k \leq j-1$, the congruence 
\begin{equation} \label{eq:cong_nj}
n \equiv -1 - \left( \frac{2}{3} \right)^k \pmod{2^j}
\end{equation}
given in \cite[p4]{Roz19}, where $n$ is the element of ${\cal N}(j)$ such that $T^{k}(n)$ is even. Indeed, we have the congruences implied by \eqref{eq:cong_nj}
\begin{itemize}
\item $T^{i}(n) \equiv -1 - \left( \frac{2}{3} \right)^{k-i} \pmod{2^{j-i}}$ for any $0 \leq i \leq k$,
\item $T^{i}(n) \equiv -1 \pmod{2^{j-i}}$ as long as $k < i < j$,
\end{itemize}
so that all iterates $T^{i}(n)$, $i<j$, are odd except when $i=k$.

It is not difficult to see that Theorem \ref{th:LBH_ABC} follows directly from the next lemma.

\begin{lemma}\label{lem:LBH_ABC}
Assume that the $abc$ conjecture is true. Then, for every $\varepsilon > 0$ and every integer $j$ sufficiently large (i.e., for any $j \geq j_{\varepsilon}$ with $j_{\varepsilon}$ depending only on $\varepsilon$), we have the lower bound
\begin{equation} \label{eq:lower_bound_nj_abc}
n \geq \frac{1}{6} \; 2^{(1 - \rho \, \varepsilon)j} - 1 \quad \text{for any $n \in {\cal N}(j)$}
\end{equation}
with $\rho = \log_2 3$.
\end{lemma}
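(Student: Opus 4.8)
The plan is to turn the congruence \eqref{eq:cong_nj} into a single Diophantine equation of $abc$ type and then feed the resulting coprime triple into the conjecture. First I would clear the denominator: multiplying $n \equiv -1 - (2/3)^k \pmod{2^j}$ by $3^k$ gives $3^k(n+1) + 2^k \equiv 0 \pmod{2^j}$. A short $2$-adic valuation argument then pins down the power of $2$ dividing $n+1$: if $n+1 = 2^a u$ with $u$ odd and $a \neq k$, the left-hand side has $2$-adic valuation exactly $\min(a,k) \leq k < j$, contradicting divisibility by $2^j$. Hence $a = k$, and writing $n+1 = 2^k u$ with $u$ odd turns the congruence into the exact identity
\[ 3^k u + 1 = 2^{\,j-k}\, t, \qquad t := \frac{3^k(n+1)+2^k}{2^j} \in \mathbb{Z}_{>0}. \]
Since $n < 2^j$ forces $u < 2^{j-k}$, I would record the crucial companion bound $t \leq 3^k$.

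Next I would read this identity as the triple $(a,b,c) = (1,\, 3^k u,\, 2^{j-k} t)$ with $a+b=c$. Coprimality is immediate, since any common divisor of $3^k u$ and $t$ divides $2^{j-k}t - 3^k u = 1$; consequently $\rad(abc) = \rad(2\cdot 3\cdot u\cdot t) \leq 6\,u\,t$. The $abc$ conjecture gives $c \leq \rad(abc)^{1+\varepsilon}$ for all but finitely many triples. To discard the exceptions uniformly I would use that $c = 2^{j-k}t \geq 2^{j-k}$ while also $c = 3^k u + 1 > 3^k$, whence $c > 2^{j/2}$ for every admissible $k$; thus once $j \geq j_\varepsilon$ the bound $c \leq (6ut)^{1+\varepsilon}$ holds for all $n \in \mathcal{N}(j)$ simultaneously.

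The estimate itself is then routine. Substituting $c = 2^{j-k}t$ and cancelling one factor of $t$ leaves $2^{j-k} \leq 6^{1+\varepsilon}\, u^{1+\varepsilon}\, t^{\varepsilon}$, and here the bound $t \leq 3^k = 2^{\rho k}$ is exactly what eliminates the dependence on $t$ at the price of a factor $2^{\rho k \varepsilon}$. Multiplying through by $2^{k(1+\varepsilon)}$ produces
\[ (n+1)^{1+\varepsilon} = (2^k u)^{1+\varepsilon} \geq \frac{2^{\,j + k\varepsilon(1-\rho)}}{6^{1+\varepsilon}}, \qquad\text{so}\qquad n+1 \geq \tfrac{1}{6}\, 2^{\,(j+k\varepsilon(1-\rho))/(1+\varepsilon)}. \]
Because $1-\rho < 0$, the exponent decreases in $k$ and is minimised at $k = j-1$; a one-line comparison (using $(j-1)(1-\rho) > j(1-\rho)$ and $\tfrac{1+\varepsilon(1-\rho)}{1+\varepsilon} > 1-\rho\varepsilon$) shows the minimal exponent still exceeds $(1-\rho\varepsilon)j$, which yields \eqref{eq:lower_bound_nj_abc} with the stated constant $\tfrac16$.

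The main obstacle is not any individual computation but securing uniformity over the free parameter $k$: the lemma quantifies over all of $\mathcal{N}(j)$, so the weakest case $k = j-1$ must already beat the target, and it is precisely the balance between the factor $2^k$ extracted from $n+1$ and the loss $t^{\varepsilon} \leq 2^{\rho k \varepsilon}$ incurred by the radical that makes the extreme case succeed. A secondary point needing care is the passage from ``all but finitely many triples'' to ``all $j \geq j_\varepsilon$'', which rests entirely on the uniform lower bound $c > 2^{j/2}$ established above.
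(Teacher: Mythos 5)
Your proposal is correct and takes essentially the same route as the paper's proof: the same coprime triple $1 + 3^k A = 2^{j-k}B$ obtained by multiplying \eqref{eq:cong_nj} by $3^k$, the same radical bound $\rad(abc) \leq 6AB$, and the same companion estimate $B \leq 3^k$ to control the exponent, differing only in bookkeeping (multiplicative rather than logarithmic form, and an explicit worst case at $k = j-1$ instead of the paper's inequality chain in $\log_2$). Your explicit $2$-adic valuation argument for $2^k \mid n+1$ and the uniform bound $c > 2^{j/2}$ used to rule out the finitely many exceptional triples are details the paper states without proof, so they are welcome but not a different method.
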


\begin{proof}
Let $j \geq 2$ and let $n \in {\cal N}(j)$ for which $T^{k}(n)$ is even with $0 \leq k \leq j-1$. 
Multiplying \eqref{eq:cong_nj} by $3^k$, we obtain
$$ 2^k + 3^k (n+1) = 2^j B$$
with $B$ a positive integer. It turns out than $n+1$ is divisible by $2^k$. Thus, we can write $n+1 = 2^k A$, which leads us to the equation
$$ 1 + 3^k A = 2^{j-k} B$$
with $3^k  A$ and $2^{j-k} B$ relatively prime. Then, we apply the $abc$ conjecture, assumed to be true, to the above triple for an arbitrary $\varepsilon > 0$. This gives the lower bound
$$  2^{j-k} B \leq \rad\left( 2^{j-k} \, 3^k \, A B \right)^{1+\varepsilon},$$ 
except maybe for finitely many cases that can be ruled out if we assume $j$ sufficiently large, that is $j \geq j_{\varepsilon}$ for a suitable $j_{\varepsilon}$. Since $\rad\left( 2^{j-k} \, 3^k \, A B \right) \leq \rad(6 A B) \leq 6 A B$, we get
$$ 2^{j-k} \leq \left( 6 A \right)^{1+\varepsilon}  B^\varepsilon$$
or, by taking the logarithm in base 2,
$$ j-k \leq (1+\varepsilon)  \log_2 (6A) + \varepsilon \log_2 B,$$
so that
$$ \log_2 6(n+1) = k + \log_2 (6A) \geq k + \frac{j - k - \varepsilon \log_2 B}{1+\varepsilon}.$$
Note that $n \leq 2^j-2$, as $2^j$ and $2^j-1$ are not in ${\cal N}(j)$. Hence,
\begin{align*}
B = \frac{3^k}{2^j} (n+1) + 2^{k-j} &\leq \frac{3^k}{2^j} \left( 2^j -1\right) + 2^{k-j} \\
&= 3^k - \frac{3^k - 2^k}{2^j} \\
&\leq 3^k.
\end{align*}
Putting together all inequalities yields 
\begin{align*}
 \log_2 6(n+1) &\geq k + \frac{j - k - \varepsilon \rho k}{1+\varepsilon} \\
 &= \frac{j - \varepsilon(\rho-1)k}{1 + \varepsilon} \\
 &> \left( \frac{1 - \varepsilon(\rho-1)}{1 + \varepsilon} \right) j \\
 &> (1 - \varepsilon \rho) j.
\end{align*}
\end{proof}

\begin{proof}[Proof of Theorem \ref{th:LBH_ABC}]
Let us fix $\varepsilon > 0$. We can further assume that $\varepsilon < 1$, otherwise \eqref{eq:lower_bound_nj_abc_th} is trivially satisfied for $K(\varepsilon) = 1$. 
In Lemma \ref{lem:LBH_ABC}, one may substitute $\frac{\varepsilon}{\rho}$ for $\varepsilon$ and we obtain that, for some $j_0$ depending on $\varepsilon$,
$$n \geq \frac{1}{6} \; 2^{(1 - \varepsilon)j} - 1 \quad \text{for any $j \geq j_0$ and $n \in {\cal N}(j)$}.$$
Now, we choose an integer $j_1 \geq 3$ such that 
$$ 2^{-(1-\varepsilon)j_1} \leq \frac16 - K_1  \quad \text{for some $0 < K_1 < \frac16$}.$$
If we set $j_2 = \max(j_0,j_1)$, then we have
$$ n \geq K_1 \; 2^{(1 - \varepsilon)j} \quad \text{for any $j \geq j_2$ and $n \in {\cal N}(j)$}.$$
Next, choose a constant $K_2 > 0$ such that 
$$ n \geq K_2 \; 2^{(1 - \varepsilon)j} \quad \text{for any $1 < j < j_2$ and $n \in {\cal N}(j)$}.$$
This is always possible due to the finiteness of ${\cal N}(j)$. A suboptimal choice might be $K_2 = 2^{-(1-\varepsilon)j_2}$, which is sufficient for the intended purpose although it could be substantially improved by using \eqref{eq:lower_bound_nj}. Finally, we obtain the lower bound \eqref{eq:lower_bound_nj_abc_th} for the suitable constant $K(\varepsilon) = \min ( K_1, K_2 ).$
\end{proof}

One may object that the lower bound \eqref{eq:lower_bound_nj_abc_th} derived from the $abc$ conjecture is still weaker than the lower bound \eqref{eq:lower_bound_nj_lbh} derived from LBH. To help resolve this discrepancy, we propose in the next section to dive deeper into the $abc$ conjecture and refine the notion of $abc$-hit.

\section{A rare type of $abc$-hits}
\label{sec:mu_abc}

There exist infinitely many triples $(a,b,c)$ of coprime positive integers such that $a+b=c$ and $c > \rad(abc)$, which are called $abc$-hits or $abc$ triples. To quantify how much $\rad(abc)$ differs from $c$, one generally refers to the notion of \textit{quality} defined by
$$q(a,b,c) = \frac{\log c}{\log \rad(abc)},$$
which is a real number greater than 1 for any $abc$-hit, hereafter noted $q$. The $abc$ conjecture asserts that, for every $\varepsilon > 0$, there exist only finitely many $abc$-hits with $q > 1+\varepsilon$.
For instance, the $abc$-hits of quality $q > 1.4$, assumed to be finite in number, are often called ``good'' $abc$ triples. Their list is maintained on \cite{ABC} and contains 241 good $abc$ triples so far. Several algorithms have been proposed to search for triples of high quality, e.g., by using the continuous fraction expansion of algebraic numbers \cite{Bro94,Nit93}.

Somehow, the quality $q$ is not a metric well-suited to treat equations of the form
$$ 1 + 3^k A = 2^l B,$$
which are at the heart of the proof of Theorem \ref{th:LBH_ABC}. Indeed, the quality takes into account the size of the radicals, but the size of the exponents is poorly constrained.

We suggest replacing the radical by a different function where the size of the exponents also comes into play. Thus, let us introduce the function $\mu : \mathbb{Z}_{\geq 1} \rightarrow \mathbb{R}_{\geq 0} $ uniquely defined by two properties:
\begin{enumerate}[(i)]
\item $\mu(p^n) = \log p + \log n$, for prime $p$ and positive integer $n$;
\item $\mu(m n) = \mu(m) + \mu(n)$, for coprime positive integers $m,n$.
\end{enumerate}
It follows from (ii) that $\mu(1) = 0$. An equivalent definition of $\mu$ is 
\begin{equation}
\mu(n) = \log \rad (n) + \log \prod_{p | n} \nu_p(n)
\end{equation}
where the product runs over all prime divisors of $n$ and where $\nu_p$ stands for the $p$-adic valuation.

The function $\mu$ is a crude measure of the number of digits (i.e., the amount of information) that are necessary to write the prime factorization of a positive integer, independently of the base of a given numeral system.
One may apply a multiplicative constant to adjust this measure to a particular base. Recall that the amount of information of the leading digit should be weighted non-uniformly, according to Benford's law. To this respect, the measure $\mu$ is consistent with Benford's law.
Furthermore, it satisfies the inequalities 
\begin{equation}\label{eq:mu_bounds}
\log \rad (n) \leq \mu (n) \leq \log n,
\end{equation}
and
\begin{equation}\label{eq:mu_prod}
\max(\mu (m), \mu (n)) \leq \mu (mn) \leq \mu (m) + \mu (n)
\end{equation}
for all positive integers $m,n$. The properties \eqref{eq:mu_bounds} and \eqref{eq:mu_prod} result from the well-known inequalities $a+b \leq ab \leq a^{b}$ for $a,b \geq 2$. If $n$ is square-free, a double equality holds in \eqref{eq:mu_bounds}. One may interpret these properties in terms of the data compression that occurs occasionally when performing the prime factorization of integers. This compression is lossless and reversible, whereas taking the radical is not.

By analogy with the $abc$-hits, we define the set 
$${\cal H}_{\mu} = \left\lbrace (a,b,c) \in \mathbb{Z}_{\geq 1}^3 :\; a+b=c,\, \gcd(a,b)=1 \text{ and } \log c > \mu(abc) \right\rbrace$$
and call \textit{$\mu$-hits} the elements of ${\cal H}_{\mu}$. As a result of the first inequality in \eqref{eq:mu_bounds}, the set ${\cal H}_{\mu}$  is trivially a subset of the $abc$-hits. It is also not empty as it contains the triple $\left( 1, 239^2, 2 \cdot 13^4 \right) $, which is the only $\mu$-hit below one million.

To estimate the size of ${\cal H}_{\mu}$, we analysed the exhaustive list of all the $abc$-hits below $10^{18}$ from the project ABC@home \cite{ABC}. Among these $14\,482\,065$ triples, we found only 464 $\mu$-hits\footnote{The list of $\mu$-hits below $10^{18}$ is available at \url{https://www.ipgp.fr/~rozier/abc/mu_hits_1018.txt}.}, of which 56 are also good $abc$ triples. Additionally, we found 175 $\mu$-hits among about 9 million $abc$-hits with $c$ between $10^{18}$ and $2^{63} \approx 9.22\cdot 10^{18}$ also identified by ABC@home.

In view of Figure \ref{fig:dist_triples}, it appears that ${\cal H}_{\mu}$ contains an increasingly small proportion of the $abc$-hits as their size increases, and only a fraction of the good $abc$ triples. Empirically, the number of $\mu$-hits below $x$ tends to follow a power law of the form $\left(x / x_0 \right)^{\alpha}$ with $\alpha \approx \frac{2}{11}$ and $x_0 \approx 10^3$, unlike the good $abc$ triples whose number is expected to be upper bounded according to the $abc$ conjecture.

\begin{figure}[ht]
\centering
\includegraphics[width=0.8\textwidth]{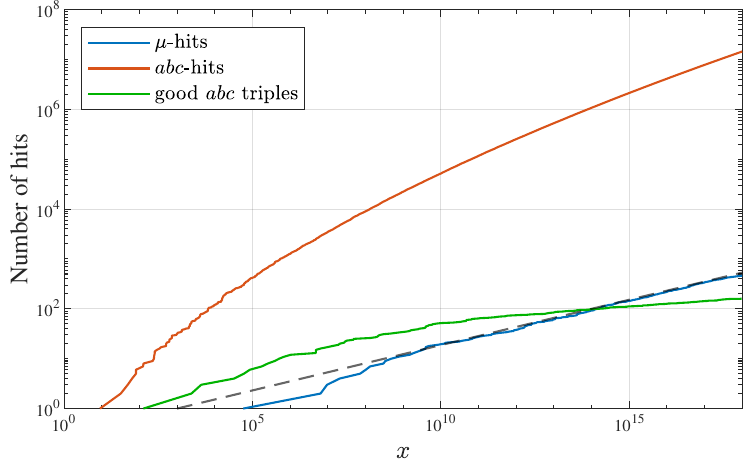}
\caption{Log-log plot of the number $N$ of $\mu$-hits (blue), $abc$-hits (red) and good $abc$ triples (green) below $x$. The dashed line corresponds to the power law $N = \left(x / x_0 \right)^{\alpha}$ with $\alpha = 2/11$ and $x_0=1\,000$. }
\label{fig:dist_triples}
\end{figure}

\renewcommand{\arraystretch}{1.2}
\setlength{\tabcolsep}{10pt}
\begin{table}[ht]
\begin{center}
\begin{tabular}{|cccrrr|}
  \hline
  a & b & c & digits & $q$ & $g$ \\
  \hline
  \hline
  1 & $239^2$ & $2 \cdot 13^4$ & 5 & 1.2540 & 0.139 \\
  2 & $3^{10} \cdot 109$ & $23^5$ & 7 & 1.6299 & 2.147 \\
  11 & $7^3 \cdot 167^2$ & $2 \cdot 3^{14}$ & 7 & 1.4283 & 0.389 \\
  1 & $3 \cdot 5^5 \cdot 47^2$ & $2^{18} \cdot 79$ & 8 & 1.4497 & 0.032 \\
  $2^2$ & $3^{15} \cdot 5$ & $17^4 \cdot 859$ & 8 & 1.3925 & 0.311 \\
 \hline
\end{tabular}
\caption{List of the five smallest $\mu$-hits $(a,b,c)$, number of digits of $c$, quality $q$ and gain $g$.}
\label{tab:mu_abc_hits}
\end{center}
\end{table}

Table \ref{tab:mu_abc_hits} gives the list of the five smallest $\mu$-hits, where the second one is the $abc$-hit with  the highest known quality discovered by E. Reyssat \cite{ABC,Bro94}. Much like the quality for $abc$-hits, it may be relevant to define a specific metric for $\mu$-hits. We propose to use
$$g(a,b,c) = \log c - \mu(abc)$$
which measures the gain of, say, ``digital information'' when expressing the prime factorization of $abc$ relatively to the standard writing of $c$. It is interesting to observe that about 75\% of the $\mu$-hits below $10^{18}$ have a gain $g <1$. The largest gain in this range is
$$ g\left( 19 \cdot 1\,307, 7 \cdot 29^2 \cdot 31^8, 2^8 \cdot 3^{22} \cdot 5^4\right) \approx 4.55 $$
from a triple originally discovered by J. Browkin and J. Brzezi\'nski \cite{Bro94}, ranked third by quality in \cite{ABC}. Moreover, when considering the list of 241 good $abc$ triples from \cite{ABC}, we find only 128 positive gains of which the largest is
$$ g\left( 2^2 \cdot 11, 3^2 \cdot 13^{10} \cdot 17 \cdot 151 \cdot 4\,423, 5^9 \cdot 139^6 \right) \approx 6.87 $$
from a triple ranked 15 by quality and discovered by A. Nitaj \cite{Nit93}.

First, one may ask if there exists a formula or an algorithm that gives infinitely many $\mu$-hits, as is the case for $abc$-hits. Indeed, we know formulas like
\begin{equation}\label{formula_abc_hits}
\left( 1, n^{2^k}-1, n^{2^k}\right) \quad \text{where $n<2^{k+1}$ is odd,}
\end{equation}
given in \cite{Lan90} with $n=3$ and for which every triple is a $abc$-hit. According to our computations, those formulas rarely generate $\mu$-hits. In other words, the gain of the resulting triples is most often negative, but not by far in the case of \eqref{formula_abc_hits} because $n^{2^k}-1$ is divisible by $2^{k+2}$. In fact, there seems to be no $\mu$-hit of this form for $n<239$. However, when setting $n=239$, we obtain a sequence of thirteen $\mu$-hits for $k=2,\ldots,14$ due to the divisibility of $n^{2^k}-1$ by $2^{k+4}$ and by $n^2+1 = 2 \cdot 13^4$ which turns out to be the smallest $\mu$-hit, as shown in Table \ref{tab:mu_abc_hits}. The gain $g$ is hard to compute with certainty for $k \geq 7$, but is fairly easy to estimate, assuming that no power of a large prime appears when factorizing the second term of the triple. Hence, for $k \geq 2$, we predict that the gain is close to the lower bound
$$\log \left( \frac{2 \cdot 13^3}{239}\right) - \log(k+4)$$
which is steadily decreasing and becomes negative for $k \geq 15$. It yields that there exist very large $\mu$-hits with as much as 38\,968 digits when setting $k=14$.

Next, we investigated this issue with other methods (e.g., the transfer method \cite{Mar16}) known to generate infinite sequences of $abc$-hits. But we found very little $\mu$-hits by using these methods.

As a result of the above observations, one must keep in mind that $\mu$-hits are far more difficult to generate than $abc$-hits.

\section{From Collatz sequences to $\mu$-hits}
\label{sec:mu_abc_Collatz}

In this section, we show how the notion of $\mu$-hit arises in the context of Collatz sequences. According to the following result, indeed, a lower bound stronger and more accurate than that of Theorem \ref{th:LBH_ABC} should hold true unless maybe if a $\mu$-hit of a given form is encountered.

\begin{theorem}\label{th:mu_ABC_LBH}
Let $T$ denote the Collatz function and let $j \geq 2$ be an integer. If $n$ is a positive integer such that the sequence $n$, $T(n)$, $\ldots$, $T^{j-1}(n)$ contains exactly one even term $T^k(n)$ with $k<j$, at least one of the following holds true:
\begin{enumerate}
\item We have the lower bound
\begin{equation}\label{eq:lower_bound_nj_mu_abc}
n > \frac{ 2^{j+1}}{3 \, j^{2}} - 1.
\end{equation}
\item There is a $\mu$-hit of the form $(1, b, b+1)$ with $b = T^k(n) + 1$.
\end{enumerate}
\end{theorem}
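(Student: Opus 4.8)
The plan is to reuse verbatim the algebraic setup from the proof of Lemma \ref{lem:LBH_ABC}, which already manufactures a triple of exactly the required shape. First I would fix $j \ge 2$ and an integer $n$ whose parity vector has its unique even entry at some position $k$ with $0 \le k \le j-1$. Since the parity vector determines $n$ modulo $2^j$, congruence \eqref{eq:cong_nj} still applies and the same manipulation gives $n+1 = 2^k A$ together with the identity
$$ 1 + 3^k A = 2^{j-k} B, \qquad \gcd\bigl(3^k A,\, 2^{j-k} B\bigr) = 1, $$
for positive integers $A,B$. Because the first $k$ iterates are odd, the relation $T(m)+1 = \tfrac32(m+1)$ iterates to the exact identity $b := T^k(n)+1 = (3/2)^k(n+1) = 3^k A$, whence $b$ is odd and $b+1 = 1 + 3^k A = 2^{j-k} B$. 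Thus $(1,b,b+1)$ is a coprime triple with $1 + b = b+1$, i.e. precisely a candidate $\mu$-hit of the form demanded in alternative (2), with $b = T^k(n)+1$ and $k < j$.

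I would then argue by dichotomy on this single triple. If $(1,b,b+1) \in \mathcal{H}_\mu$ we are in case (2) and done; otherwise $\log(b+1) \le \mu\bigl(b(b+1)\bigr) = \mu(b)+\mu(b+1)$, the equality from $\gcd(b,b+1)=1$. I would estimate each factor using the defining value $\mu(p^m)=\log p + \log m$, the subadditivity $\mu(mn)\le\mu(m)+\mu(n)$ of \eqref{eq:mu_prod}, and the bound $\mu(n)\le\log n$ of \eqref{eq:mu_bounds}, all valid for arbitrary (not necessarily coprime) factors. This yields
$$ \mu(b) = \mu(3^k A) \le \log 3 + \log k + \log A, \qquad \mu(b+1) = \mu(2^{j-k} B) \le \log 2 + \log(j-k) + \log B. $$

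Finally I would combine these with the exact value $\log(b+1) = (j-k)\log 2 + \log B$. The decisive point is that $\log B$ occurs with coefficient $1$ on both sides and cancels, leaving $(j-k-1)\log 2 \le \log 3 + \log k + \log(j-k) + \log A$; adding $k\log 2$ and using $\log(n+1)=k\log 2 + \log A$ gives $\log(n+1) \ge (j-1)\log 2 - \log 3 - \log\bigl(k(j-k)\bigr)$, after which $k(j-k)\le j^2/4$ produces $n+1 \ge 2^{j+1}/(3j^2)$, i.e. the lower bound \eqref{eq:lower_bound_nj_mu_abc}. The crux to watch is exactly this cancellation of $\log B$: it is what distinguishes $\mu$ from the radical and replaces the multiplicative $\varepsilon$-loss of Theorem \ref{th:LBH_ABC} by a mere polynomial factor $j^2$, since $\mu$ charges the large exponents $k$ and $j-k$ only logarithmically. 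The sole routine caveat is the degenerate case $k=0$, where $3^k=1$ and $\log k$ is absent; there $b=n+1$ and the identical computation gives the even stronger bound $n+1 \ge 2^{j-1}/j$, so \eqref{eq:lower_bound_nj_mu_abc} holds a fortiori for all $j \ge 2$.
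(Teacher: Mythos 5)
Your proof is correct and matches the paper's own argument essentially step for step: the same triple $(1,\, 3^k A,\, 2^{j-k} B)$ arising from \eqref{eq:cong_nj}, the same dichotomy on membership in $\mathcal{H}_\mu$, the same $\mu$-estimates via \eqref{eq:mu_prod} and \eqref{eq:mu_bounds} with the decisive cancellation of $\log B$, and the same final step $k(j-k) \le j^2/4$. The only cosmetic difference is the degenerate case $k=0$, which the paper dispatches directly from the congruence (giving $n = 2^j-2$, so statement \textit{1} holds outright) rather than by rerunning the estimate without the $\log k$ term as you do; both treatments are valid.
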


\begin{proof}
We may assume that $n$ is an element of ${\cal N}(j)$ as defined in \S\ref{sec:LB}, otherwise we have $n \geq 2^j$ and statement \textit{1} is satisfied. Let $k$ be the smallest integer for which $T^k(n)$ is even.

The case $k=0$ gives $n= 2^j-2$ by using \eqref{eq:cong_nj} and, again,  statement \textit{1} is satisfied. Hereafter, we assume that $1 \leq k \leq j-1$.

We follow the proof of Lemma \ref{lem:LBH_ABC} which leads to an equation of the form
$$ 1 + 3^k A = 2^{j-k} B $$
with $n = 2^k A - 1$ and $A$, $B$ positive integers. The first $k$ iterates are $ T^i(n) = 3^i 2^{k-i} A -1$ for $i \leq k$, so that
$$  T^k(n) = 3^k A -1.$$
Setting $b= T^k(n) + 1$, we ask whether the triple $(1, b, b+1)$ is in ${\cal H}_{\mu}$.
If it is in ${\cal H}_{\mu}$, we are led to statement \textit{2}. Otherwise, we have the inequality 
\begin{align*}
\log (b+1) &\leq \mu \left( b(b+1) \right) \\
 &= \mu(b) + \mu(b+1) \\
 &= \mu \left( 3^k A \right) + \mu \left( 2^{j-k} B \right) \\
 &\leq \mu \left( 3^k \right) + \mu \left( 2^{j-k} \right) + \mu (A) + \mu(B),  \quad \quad \quad \text{by using \eqref{eq:mu_prod},}\\
 &= \log 3 + \log k + \log 2 + \log (j-k) + \mu (A) + \mu(B) \\
 &\leq \log 6 + \log \left( k (j-k) \right) +\log A + \log B,  \quad \quad \text{by using \eqref{eq:mu_bounds}}.
\end{align*}
It yields that
\begin{align*}
 \log A &\geq \log (b+1) - \log B - \log \left( k (j-k) \right) - \log 6 \\
 &= \log \left( 2^{j-k} B \right) - \log B - \log \left( k (j-k) \right) - \log 6  \\
 &= (j-k) \log 2 - \log \left( k (j-k) \right) - \log 6, 
\end{align*}
so that
$$ n = 2^k A - 1 \geq \frac{2^j}{6 k (j-k)} - 1.$$
To conclude that statement \textit{1} holds true, observe that $k (j-k) \leq  \bigl( \frac{j}{2} \bigr) ^2$.
\end{proof}

Let us point out that Theorem \ref{th:mu_ABC_LBH} is of interest for two reasons. On the one hand, the effective bound \eqref{eq:lower_bound_nj_mu_abc} in statement \textit{1} fully agrees with the expected bound \eqref{eq:lower_bound_nj_lbh} derived from Hypothesis \ref{hyp:LBH} (LBH) when setting $C=1$. One may further verify that \eqref{eq:lower_bound_nj_mu_abc} is slightly stronger than \eqref{eq:lower_bound_nj_lbh} for $C=1$. On the other hand, Theorem \ref{th:mu_ABC_LBH} also provides a method based on the Collatz function for finding large $\mu$-hits.


\renewcommand{\arraystretch}{1.2}
\setlength{\tabcolsep}{7.5pt}
\begin{table}
\begin{center}
\begin{tabular}{|rrrccccr|}
  \hline
  j & k & digits & pow($A$) & pow($B$) & $q$ & $g$ & $C$ \\
  \hline
  \hline
  19 & 16 & 9 & - & $53^3$ & 1.474 & 1.313 & $-1.285$ \\
  85 & 56 & 32 & - & - &  1.115 & 0.097 & 0.865 \\
  108 & 85 & 47 & - & $7^4 \cdot 31^3$ & 1.137 & 2.861 & $-0.783$ \\
  160 & 26 & 53 & $7^8$ & - & 1.110 & 1.825 & $-1.151$ \\
  294 & 127 & 111 & $37^2$ & $7^6$ & 1.052 & $0.176$ & $-0.987$ \\
  626 & 382 & 251 & - & $17^2$ & 1.023 & 0.588 & 0.659 \\
  783 & 677 & 354 & - & $13^3 \cdot 43^4$ & 1.022 & 3.566 & $-0.758$ \\
  861 & 15 & 259 & $7^2$ & $5^2 \cdot 11^2$ & 1.022 & 1.098 & 0.105 \\
  874 & 45 & 271 & $7^9$ & - & 1.024 & 1.987 & $-1.009$ \\
  1\,056 & 921 & 479 & - & $47^3 \cdot 109^2$ & 1.013 & 0.858 & $-0.600$ \\
  1\,094 & 2 & 329 & $7^2 \cdot 13^2 \cdot 1\,093^2$ & ($B=1$) & 1.016 & 2.145 & $-0.829$ \\
  1\,357 & 1\,174 & 614 & $7^5$ & $5^4$ & 1.011 & 0.194 & $-0.494$ \\
  1\,367 & 296 & 463 & $7^2 \cdot 67^4$ & - & 1.015 & 0.697 & $-0.768$ \\
  1\,475 & 231 & 485 & - & $11^3 \cdot 13^3 \cdot 97^3$ & 1.016 & 1.469 & $-1.130$ \\
  2\,035 & 606 & 719 & - & $5^4 \cdot 11^6$ & 1.010 & 0.015 & $-0.890$ \\
  2\,186 & 2 & 658 & $7^2 \cdot 13^2 \cdot 1\,093^2$ & ($B=1$) & 1.008 & 1.452 & $-0.844$ \\
  3\,279 & 3 & 987 & $7^2 \cdot 13^2 \cdot 1\,093^2$ & ($B=1$) & 1.006 & 1.739 & $-0.716$ \\
  3\,514 & 4 & 1\,057 & $3\,511^2$ & ($B=1$) & 1.004 & 0.524 & $-0.584$ \\
  4\,370 & 2 & 1\,315 & $7^2 \cdot 13^2 \cdot 1\,093^2$ & ($B=1$) & 1.004 & 0.758 & $-0.857$ \\
  4\,393 & 436 & 1\,396 & - & $13^4 \cdot 23^2$ & 1.005 & 0.519 & $-0.175$ \\
  5\,461 & 488 & 1\,723 & $5^2$ & - & 1.004 & 1.024 & 0.813 \\
  6\,962 & 396 & 2\,159 & - & - & 1.003 & 0.063 & 0.766 \\
  13\,056 & 11\,808 & 6\,002 & - & $17^2$ & 1.001 & 1.284 & 0.735 \\
  27\,466 & 13\,732 & 10\,678 & - & $17^2$ & 1.001 & 1.874 & 0.916 \\
  28\,107 & 27\,795 & 13\,351 & $7^4$ & $5^3$ & 1.001 & 0.721 & 0.066 \\
 \hline
\end{tabular}
\caption{For $\mu$-hits of the form $\left( 1, 3^k A, 2^{j-k} B \right)$ related to Collatz, length $j$ of the corresponding sequence and index $k$ of its unique even term, number of digits of $2^{j-k} B$, prime powers in the factorization of $A$ and $B$, quality $q$ and gain $g$ of the triple, value of $C$ for which the equality holds in LBH on $j$ iterations.}
\label{tab:mu_abc_lbh}
\end{center}
\end{table}


In Theorem \ref{th:mu_ABC_LBH}, statement \textit{2} seems unlikely to occur, except maybe on rare occasions in view of the low density expectations regarding ${\cal H}_{\mu}$, so that statement 1 should hold true in most cases. This prediction is straightforward to verify for Collatz sequences of various lengths $j$ by computing the $j$ elements of ${\cal N}(j)$. In practice, we conducted a systematic search for all $j\leq 5\,000$. Due to the large size of the numbers considered, we used trial division to obtain a partial factorization with a size limit of $10^6$ on prime factors. Therefore, we possibly missed $\mu$-hits involving powers of large primes, although the probability is low. For all $5\,000 < j \leq 30\,000$, we restricted the search to sequences for which equality holds in LBH with $C \geq 0$ (see Figure \ref{fig:LBH_misses}). Such a restriction should not affect the verification of statement 1, but we probably missed many $\mu$-hits for which equality holds in LBH with $C<0$.

According to the computation results, inequality \eqref{eq:lower_bound_nj_mu_abc} is always satisfied in this range. Unexpectedly, we found a number of cases where statements \textit{1} and \textit{2} hold together. These cases are linked to a number of $\mu$-hits detailed in Table \ref{tab:mu_abc_lbh}. Remarkably, the smallest triple referenced in this table is $\left(1 , 3^{16} \cdot 7, 2^3 \cdot 11 \cdot 23 \cdot 53^3 \right)$, which is one of the $abc$-hit with the highest known quality (rank 22 by quality to date) discovered by A. Nitaj \cite{ABC,Bro94,Nit93}. Moreover, bound \eqref{eq:lower_bound_nj_mu_abc} is very sharp (by less than 1\%) for the second $\mu$-hit in Table \ref{tab:mu_abc_lbh}.

One may distinguish two kinds of $\mu$-hits in Table \ref{tab:mu_abc_lbh}, those for which \eqref{eq:lower_bound_nj_mu_abc} is rather sharp (that is, when the value $C$ in the last column of Table \ref{tab:mu_abc_lbh} is close to 1), and those more numerous for which high powers of primes occur in the factorization of $A$ and $B$.

Here, we should emphasize that there exist $\mu$-hits related to a Collatz sequence of  length $j$ starting above $2^j$, so that they are not listed in 
 Table \ref{tab:mu_abc_lbh}. E.g., the sequence starting from $n=13\,806\,249$ with length $j=19$ is linked to the fourth $\mu$-hit in Table \ref{tab:mu_abc_hits} discovered by G. Frey \cite{ABC,Bro94}.

It is amazing to observe that five $\mu$-hits (for which $B=1$) referenced in Table \ref{tab:mu_abc_lbh} are closely related to the known Wieferich primes, 1\,093 and 3\,511. They belong to a large family of $\mu$-hits of the form $\left( 1, b, b+1 \right)$ with
$$b = 2^{m(p-1)} - 1, \quad \text{$m \geq 1$ and $p$ a Wieferich prime.}$$
The associated Collatz sequences, of length $j=m(p-1)+k$ with $k$ the 3-adic valuation of $b$, start from the integer $n = (2^{j} - 3^k - 2^k)/3^k$, which is generally well above the bound \eqref{eq:lower_bound_nj_mu_abc}, then leading to $T^k(n) = 2^{j-k} - 2$ and to $T^j(n) = 3^{j-k-1} - 1$. In the next section, we briefly analyse the family of $\mu$-hits linked to Wieferich primes.

\section{Further insights from Wieferich primes}
\label{sec:Wief}

According to the computations of the previous section, all $\mu$-hits of the form $\left( 1, 2^n - 1, 2^{n} \right)$ seem to occur when $n$ is divisible by $p-1$ for some Wieferich prime $p$. Conversely, if $p$ is a Wieferich prime, one may wonder how many $\mu$-hits  there are of the form $(1,b_m,b_m+1)$ with $b_m = 2^{m(p-1)} - 1$ and $m$ a positive integer. Assuming that $(1, b_1,b_1 + 1)$ is a $\mu$-hit and taking into account the divisibility of $b_m$ by $b_1$ is not sufficient to ensure that the triple $(1,b_m,b_m+1)$ is also a $\mu$-hit for any $m$. Therefore, it is so far unclear if we can generate infinitely many $\mu$-hits of this form as $m$ increases. By setting successively $p=1\,093$ and $p=3\,511$, we find that the respective proportions of such $\mu$-hits with $m \leq 1\,000$ are at least 24\% and 27\%, by checking only the powers of small primes. Note that these proportions tend to decrease when considering larger values of $m$.
The next lemma shows that $\mu$-hits are far more likely to occur when combining several Wieferich primes.

\begin{lemma}\label{lem:mu_Wief}
Let $k \geq 1$ and let $p_1, \ldots, p_k$ be distinct Wieferich primes. If we put $L = \lcm \left( p_1 - 1, \ldots, p_k - 1\right) $, then we have
$$g\left( 1,2^L-1,2^L\right) > \log\left( \frac{p_1 \cdots p_k}{2^{k+1} L}\right).$$
\end{lemma}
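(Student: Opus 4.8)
The plan is to evaluate the gain $g(1, 2^L - 1, 2^L)$ in closed form and reduce the assertion to a single clean upper bound on $\mu(2^L - 1)$. The triple is admissible since $1 + (2^L - 1) = 2^L$ and $\gcd(1, 2^L - 1) = 1$. As $2^L - 1$ is odd and thus coprime to $2^L$, additivity (ii) gives $\mu\bigl((2^L - 1)\,2^L\bigr) = \mu(2^L - 1) + \mu(2^L)$, while (i) gives $\mu(2^L) = \log 2 + \log L$. Hence
\[
g(1, 2^L - 1, 2^L) = L\log 2 - \mu(2^L - 1) - \log 2 - \log L.
\]
Substituting into the stated inequality and cancelling the common term $-\log L$, I find the claim is equivalent to the estimate
\[
\mu(2^L - 1) < L\log 2 - \log(p_1 \cdots p_k) + k\log 2 .
\]

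The key number-theoretic input comes next: I would show $p_i^2 \mid 2^L - 1$ for every $i$. Since $p_i - 1 \mid L$, write $L = (p_i - 1)m_i$; the Wieferich congruence $2^{p_i - 1} \equiv 1 \pmod{p_i^2}$ then yields $2^L = (2^{p_i - 1})^{m_i} \equiv 1 \pmod{p_i^2}$, so $p_i^2 \mid 2^L - 1$. Consequently, writing $n_i = v_{p_i}(2^L - 1)$ for the $p_i$-adic valuation, each $n_i \geq 2$, and as the $p_i$ are distinct I can factor $2^L - 1 = \bigl(\prod_i p_i^{n_i}\bigr)\, s$ with $s$ coprime to every $p_i$.

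To bound $\mu(2^L - 1)$ I apply additivity (ii), then $\mu(p_i^{n_i}) = \log p_i + \log n_i$ together with $\mu(s) \le \log s$ from \eqref{eq:mu_bounds}, and finally $\log s = \log(2^L - 1) - \sum_i n_i \log p_i$, obtaining
\[
\mu(2^L - 1) \le \sum_{i=1}^k (\log p_i + \log n_i) + \log s = \log(2^L - 1) + \sum_{i=1}^k \bigl(\log p_i + \log n_i - n_i \log p_i\bigr).
\]
The heart of the estimate is the elementary inequality $\log n_i \le \log 2 + (n_i - 2)\log p_i$, equivalently $n_i \le 2\,p_i^{\,n_i - 2}$, valid for all $n_i \ge 2$ and $p_i \ge 2$ (with equality exactly at $n_i = 2$). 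Rewriting each summand as $-(n_i - 2)\log p_i + \log n_i - \log p_i$, this inequality bounds it by $\log 2 - \log p_i$, so that $\mu(2^L - 1) \le \log(2^L - 1) - \log(p_1 \cdots p_k) + k\log 2$. The strictness then comes for free from $2^L - 1 < 2^L$, giving $\log(2^L - 1) < L\log 2$ and hence exactly the displayed target inequality.

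The conceptual crux — and the only place the Wieferich hypothesis is genuinely used — is the lower bound $n_i \ge 2$ on each valuation, which is precisely what produces a net ``compression'' saving of at least $\log p_i - \log 2$ per prime when passing from $\log$ to $\mu$; for an ordinary prime factor with exponent $1$ there is no saving at all and the argument collapses. The remaining steps are routine, so the main thing to get right is keeping the bookkeeping of $n_i \le 2\,p_i^{\,n_i - 2}$ sharp enough that the accumulated savings match the target $\log(p_1 \cdots p_k) - k\log 2$ exactly rather than merely up to a constant.
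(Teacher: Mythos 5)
Your proof is correct and follows essentially the same route as the paper: both use the Wieferich hypothesis to get $p_i^2 \mid 2^L - 1$, convert that into a saving of $\log p_i - \log 2$ per prime to obtain $\mu\left(2^L-1\right) < (k+L)\log 2 - \log\left(p_1 \cdots p_k\right)$, and then conclude by the same computation of the gain. The only difference is technical: the paper splits off $p_1^2 \cdots p_k^2$ and applies the subadditivity $\mu(mn) \leq \mu(m) + \mu(n)$, which holds even for non-coprime $m,n$, so it never needs exact valuations, whereas you track the exact valuations $n_i \geq 2$ and therefore need the extra elementary estimate $n_i \leq 2\,p_i^{\,n_i-2}$ --- a step the paper's formulation avoids.
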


\begin{proof}
Assuming that $p_1, \ldots, p_k$ are $k$ distinct Wieferich primes, then, for any $1 \leq i \leq k$, $2^{L}-1$ is divisible by $2^{p_i-1}-1$ and, consequently, by $p_i^2$. We infer that
\begin{align*}
\mu \left( 2^L - 1 \right)  &\leq \mu \left( p_1^2 \cdots p_k^2 \right) + \mu \left( \frac{2^L - 1}{ p_1^2 \cdots p_k^2}\right), \quad \text{by using \eqref{eq:mu_prod},}\\
 &< \mu \left( p_1^2 \cdots p_k^2 \right) + \log \left( \frac{2^L}{ p_1^2 \cdots p_k^2}\right), \quad \text{by using \eqref{eq:mu_bounds},}\\
 &= (k+L) \log 2 - \log \left( p_1 \cdots p_k \right),
\end{align*}
so that
\begin{align*}
g\left( 1,2^L-1,2^L\right) &> \log \left( p_1 \cdots p_k \right) - k \log 2 - \mu \left( 2^L \right) \\
 &= \log\left( \frac{p_1 \cdots p_k}{2^{k+1} L}\right).
\end{align*}
\end{proof}

As a consequence of Lemma \ref{lem:mu_Wief}, the gain of a triple of the form $\left( 1, 2^{p-1} - 1, 2^{p-1} \right)$ with $p$ a Wieferich prime is bounded from below by $-\log 4$, which is not enough to conclude that such a triple is a $\mu$-hit. However, if we apply this lemma to the known Wieferich primes $p_1=1\,093$, $p_2=3\,511$, and put $L = \lcm \left(  p_1 - 1, p_2 - 1 \right) = 49\,140$, we obtain the lower bound
$$g\left( 1, 2^L-1, 2^L \right) >  \log\left( \frac{p_1 p_2}{8 L}\right) = 2.278\ldots$$
without taking into account the divisibility of  $2^L-1$ by the prime powers $3^4$, $5^2$, $7^2$, and $13^2$. By doing so, we get the much higher bound
$$g\left( 1, 2^L-1, 2^L \right) \geq  8.228\ldots$$
which is likely to be the actual gain of this triple whose second element has 14\,793 digits. From this particular example, it is possible to generate thousands of large $\mu$-hits of the form $\left( 1,2^{mL}-1,2^{mL}\right) $ by choosing adequate values of $m$. E.g., when setting $m=q^k$ with $q$ an odd prime such that $q-1$ divides $L$ (that is, $q \in \left\lbrace 3,5,7,11,13,19,29,\ldots ,24\,571 \right\rbrace $), we expect the gain of the resulting triples to decrease very slowly as $k$ increases, due to the fact that $2^{q^k L}-1$ is divisible by $3^4 \cdot 5^2 \cdot 7^2 \cdot 13^2 \cdot 1\,093^2 \cdot 3\,511^2 \cdot q^k$, where $q^k$ can be replaced by $q^{k+1}$ whenever $2^L-1$ is not divisible by $q^2$. The proof by induction on $k$ is  straightforward.

Regarding the question of whether there exist infinitely many $\mu$-hits, we present a heuristic argument directly related to the Wieferich primes. The main point is that
this set of primes is expected to be infinite although only two are known, according to simple statistical considerations \cite{Cra97}. Indeed, one expects the number of primes $p$ below $x$ having presumably a given property with probability $1/p$ to grow like $\log \log x$, when applying the prime number theorem. Such a reasoning, whose prediction can hardly be neglected, was named the ``$\log \log$ Philosophy'' by Jean-Pierre Serre (see \cite[p413]{Rib96}).

\begin{theorem}\label{th:mu_Wief}
If the set of Wieferich primes is infinite, then so is the set ${\cal H}_{\mu}$ of $\mu$-hits.
\end{theorem}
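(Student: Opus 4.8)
The plan is to produce, under the hypothesis that there are infinitely many Wieferich primes, infinitely many distinct triples of the form $(1, 2^L - 1, 2^L)$ that lie in ${\cal H}_{\mu}$, and Lemma \ref{lem:mu_Wief} is exactly the tool for this. A triple $(1, 2^L-1, 2^L)$ is a $\mu$-hit precisely when its gain $g$ is positive, and the lemma bounds $g$ from below by $\log\left(\frac{p_1 \cdots p_k}{2^{k+1} L}\right)$ for any distinct Wieferich primes $p_1, \ldots, p_k$ with $L = \lcm(p_1 - 1, \ldots, p_k - 1)$. So it suffices to exhibit infinitely many collections of Wieferich primes for which this ratio exceeds $1$.

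The naive idea, namely to take the first $k$ Wieferich primes and let $k \to \infty$, does not obviously work, and this is the main obstacle. Each new prime multiplies the numerator $\prod p_i$ by $p_{k+1}$ but also multiplies the denominator by $2$, and it may enlarge $L$ by as much as a factor $p_{k+1} - 1$ (in the worst case $\gcd(L, p_{k+1}-1) = 1$). Since $\frac{p_{k+1}}{2(p_{k+1}-1)} \to \frac12$, adjoining one prime can nearly halve the ratio, so the lower bound may deteriorate to $0$ as $k$ grows; the exponential factor $2^{k+1}$ is exactly what blocks a direct ``use all Wieferich primes'' argument.

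The resolution is that we do not need the bound to hold for every $k$, only infinitely often. So I would fix a finite \emph{seed} of Wieferich primes whose ratio comfortably exceeds $2$, and then adjoin a single, arbitrarily large Wieferich prime, keeping the number of primes equal to $3$ so that $2^{k+1}$ stays bounded. Concretely, take the two known Wieferich primes $p_1 = 1093$ and $p_2 = 3511$; with $\ell = \lcm(1092, 3510) = 49140$ one checks $\frac{p_1 p_2}{2^{3}\ell} \approx 9.76 > 2$. For any Wieferich prime $p > 3511$, apply Lemma \ref{lem:mu_Wief} to $\{p_1, p_2, p\}$ and set $L_p = \lcm(1092, 3510, p-1)$. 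Using the elementary bound $L_p \le \ell(p-1)$ gives $g\left(1, 2^{L_p}-1, 2^{L_p}\right) > \log\left(\frac{p_1 p_2\, p}{2^{4} L_p}\right) \geq \log\left(\frac{p_1 p_2}{2^{4}\ell}\cdot \frac{p}{p-1}\right) > \log\left(\frac{9.76}{2}\right) > 0$, so each such triple is a $\mu$-hit.

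Finally I would close with a distinctness argument. Since $L_p \geq p-1$, the values $L_p$ are unbounded as $p$ ranges over the infinitely many Wieferich primes exceeding $3511$, so the set $\{L_p\}$ is infinite and the corresponding triples $(1, 2^{L_p}-1, 2^{L_p})$ are pairwise distinct, yielding infinitely many elements of ${\cal H}_{\mu}$. The only delicate points are the worst-case upper bound $L_p \le \ell(p-1)$ (immediate from $\lcm(a,b) = ab/\gcd(a,b)$) and the numerical verification that the seed ratio exceeds $2$; everything else is routine, and the argument avoids the deterioration described above by never letting the number of combined primes grow.
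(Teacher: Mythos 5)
Your proof is correct, but it takes a genuinely different route from the paper's in the one place where the argument has real content: how to beat the factor $2^{k+1}$ in Lemma \ref{lem:mu_Wief}. The paper does precisely the ``use all Wieferich primes'' argument you believed was blocked: it takes $L_k = \lcm(p_1-1,\ldots,p_k-1)$ for a growing family and observes that, since every $p_i - 1$ is \emph{even}, each new term enlarges the lcm by at most a factor $(p_i-1)/2$ rather than $p_i - 1$; concretely $L_k \leq 2^{2-k} L_2 (p_3-1)\cdots(p_k-1)$, so the accumulated factor $2^{-(k-2)}$ exactly cancels the $2^{k+1}$ and yields the uniform bound $g\left(1,2^{L_k}-1,2^{L_k}\right) > \log\left(\frac{p_1 p_2}{8 L_2}\right) = 2.278\ldots$ for all $k$. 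Your worst case $\gcd(L, p_{k+1}-1)=1$ cannot occur, so the obstacle you describe is not actually there; still, your workaround is perfectly valid. You keep $k=3$ fixed (the seed $\{1093, 3511\}$ plus one arbitrarily large Wieferich prime $p$), use only the trivial bound $\lcm(\ell, p-1) \leq \ell(p-1)$, and get the slightly weaker but positive bound $g > \log(4.88)$; both proofs then conclude identically, via $L \geq p-1$ forcing the exponents to be unbounded and hence the triples to be infinitely many. What each approach buys: yours is more elementary and robust, needing no structural fact about the numbers $p_i - 1$ beyond the trivial lcm bound; the paper's evenness trick is sharper, giving a single infinite chain $(1, 2^{L_k}-1, 2^{L_k})$ with gain uniformly above $2.278$, which is what the paper exploits afterwards to manufacture large explicit $\mu$-hits. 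You might note that your argument, unlike the paper's, would survive even if the relevant moduli were odd, which is a mild form of generality, though irrelevant here.
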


\begin{proof}
First, we set $p_1=1\,093$, $p_2=3\,511$, and $L_2 = \lcm \left( p_1 - 1, p_2 - 1 \right) = 49\,140$. If we assume the set of Wieferich primes to be infinite, then there exist distinct Wieferich primes $p_3, \ldots, p_k$ not equal to $p_1,p_2$ for any $k \geq 3$. Setting $L_k = \lcm \left( p_1 - 1, \ldots, p_k - 1\right) $, we have
\begin{align*}
L_k  &= \lcm \left( L_2, p_3 - 1, \ldots, p_k - 1\right) \\
 &\leq 2^{2-k} \, L_2 \, (p_3 - 1) \, (p_k - 1), \quad \text{since all terms are even,}\\
 & < 2^{2-k} \, L_2 \, p_3 \cdots p_k.
\end{align*}
Applying Lemma \ref{lem:mu_Wief}, we get
\begin{align*}
g\left( 1,2^{L_k}-1,2^{L_k}\right) &> \log\left( \frac{p_1 \cdots p_k}{2^{k+1} L_k}\right) \\
 &> \log\left( \frac{p_1 p_2}{8 L_2}\right) = 2.278\ldots.
\end{align*}
Thus, we obtain an infinite sequence of $\mu$-hits of the form $\left( 1,2^{L_k}-1,2^{L_k}\right)$ for $k=2,3,\ldots$. However, the exponents $L_k$ are not necessarily distinct and it can happen that $L_{k+1} = L_k$ for some $k$. To show that we obtain  infinitely many distinct triples, it suffices to observe that $L_k \geq p_k - 1$, so that the sequence of powers $2^{L_k}$ is unbounded.
\end{proof}

\begin{conjecture}\label{conj_mu}
There are infinitely many $\mu$-hits.
\end{conjecture}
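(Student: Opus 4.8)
The plan is to deduce the conjecture from Theorem~\ref{th:mu_Wief}, which has already reduced the problem to a purely multiplicative one: it suffices to prove that the set of Wieferich primes is infinite. Granting that, Theorem~\ref{th:mu_Wief} manufactures an explicit infinite sequence of distinct $\mu$-hits $\left(1, 2^{L_k}-1, 2^{L_k}\right)$ whose gain stays above the constant $2.278\ldots$, and the conjecture follows at once. Thus the entire difficulty is transferred onto the infinitude of the Wieferich set, and the remaining task is to attack that statement.

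To do so, I would try to make rigorous the ``$\log\log$ philosophy'' invoked before Theorem~\ref{th:mu_Wief}: since a prime $p$ is Wieferich precisely when the Fermat quotient $q_p(2) = (2^{p-1}-1)/p$ vanishes modulo $p$, and since such vanishing ought to occur with ``probability'' $1/p$, Mertens' estimate $\sum_{p \le x} 1/p \sim \log\log x$ predicts infinitely many Wieferich primes. The concrete route would be to exhibit arithmetic or analytic structure in the distribution of $q_p(2) \bmod p$ forcing the value $0$ to recur, for instance by controlling these quotients along a suitable family of primes.

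The hard part --- and, in my view, the genuine obstruction --- is exactly this last step. The infinitude of Wieferich primes is a celebrated open problem for which only two instances, $1093$ and $3511$, are known, and it is \emph{not} known to follow even from the $abc$ conjecture; on the contrary, $abc$ is known to imply the infinitude of the complementary set of \emph{non}-Wieferich primes \cite{Sil88}. No mechanism, conjectural or proven, is currently available that would guarantee $q_p(2)\equiv 0 \pmod p$ infinitely often, so this line cannot presently be pushed through unconditionally.

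A second, ostensibly independent attack would seek an explicit infinite family of $\mu$-hits directly, bypassing Wieferich primes. For a triple $a+b=c$ this requires the factorization of $abc$ to be compressible enough that the total ``savings'' $\log(abc) - \mu(abc) = \sum_{p^e \| abc,\ e \ge 2} \bigl((e-1)\log p - \log e\bigr)$ exceeds $\log(abc) - \log c = \log(ab)$. For the natural candidates $\left(1, 2^n-1, 2^n\right)$ the factor $2^n$ already contributes almost all of the required savings, leaving a small deficit of order $\log n$ that must be absorbed by repeated prime factors of $2^n-1$; but $p^2 \mid 2^n-1$ for $(p-1)\mid n$ is exactly the Wieferich condition, so this route does not truly escape the obstruction above. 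It is worth emphasizing, finally, that the $abc$ conjecture poses no barrier to Conjecture~\ref{conj_mu}: the $\mu$-hits produced this way have quality tending to $1$, so an unbounded supply of them is entirely consistent with $abc$ and with the empirical power law of Figure~\ref{fig:dist_triples}. The difficulty is one of construction, not of a competing upper bound.
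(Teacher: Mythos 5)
This statement is a conjecture that the paper itself leaves unproven: its only support is the conditional Theorem~\ref{th:mu_Wief} (infinitude of Wieferich primes implies infinitude of $\mu$-hits) together with the ``$\log \log$'' heuristic, which is exactly the reduction and the heuristic you describe. Your assessment therefore matches the paper's own reasoning --- including the correct identification of the infinitude of Wieferich primes as the genuine open obstruction, and the correct remark that the $abc$ conjecture is only known to yield infinitely many \emph{non}-Wieferich primes \cite{Sil88} --- so you have faithfully reproduced the paper's position rather than closed a gap that the paper itself does not close.
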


Finally, we formulate the above conjecture that reflects the preceding arguments and findings. Although our results need not assume its validity, it is important in view of Theorem \ref{th:mu_ABC_LBH}.

\section{Conclusion}

Somehow, the results presented so far show that the Collatz and $abc$ conjectures are more connected than previously thought and suggest that they may well be of comparable hardness. Nevertheless, the validity of the $abc$ conjecture is very unlikely to imply that of the Collatz conjecture. Indeed, the general formulas linked to $T$ iterations involve an unbounded number of terms which are mostly sums of powers of 2 and 3. Therefore, a generalization of the $abc$ conjecture might be required to treat expressions with much more terms than just $a,b,c$. Various statements have been proposed (see \cite{Bro94,Mar16}), but they are not well suited in this context. Further studies are necessary to strengthen the link between these not-so-distant conjectures and investigate how far the notion of $\mu$-hit can contribute.

\section*{Acknowledgements}
The author is grateful to Nik Lygeros for helpful comments.

\hspace{10pt}

\textsc{Institut de Physique du Globe de Paris, Université Paris Cité, France}

{\it E-mail: }{\tt rozier@ipgp.fr}

\end{document}